\newfont{\bcb}{msbm10}
\newfont{\matb}{cmbx10}
\newfont{\got}{eufm10}
\newtheorem{theorem}{Theorem}[section]
\newtheorem{proposition}[theorem]{Proposition}
\newtheorem{corollary}[theorem]{Corollary}
\theoremstyle{definition}
\theoremstyle{remark}
\newtheorem{remark}[theorem]{Remark}
\numberwithin{equation}{section}
\begin{document}

\title[On functions given by algebraic power series]{On functions given by algebraic power
       series over Henselian valued fields}

\author[Krzysztof Jan Nowak]{Krzysztof Jan Nowak}

%\footnotetext{Research partially supported by by KBN Grant No.\
%1P03A 00527.}

\subjclass[2000]{12J25, 13B40, 14P20.}

\keywords{Implicit function, density property, the theorems of
Artin--Mazur, Abhyankar--Jung and Newton-Puiseux, definable
functions of one variable}

%\date{}

%\vspace{3ex}

\begin{abstract}
This paper provides, over Henselian valued fields, some theorems
on implicit function and of Artin--Mazur on algebraic power
series. Also discussed are certain versions of the theorems of
Abhyankar--Jung and Newton--Puiseux. The latter is used in
analysis of functions of one variable, definable in the language
of Denef--Pas, to obtain a theorem on existence of the limit,
proven over rank one valued fields in one of our recent papers.
This result along with the technique of fiber shrinking (developed
there over rank one valued fields) were, in turn, two basic tools
in the proof of the closedness theorem.
\end{abstract}

\maketitle

\section{Introduction}

Let $K$ be a Henselian valued field of equicharacteristic zero
with valuation $v$, valuation ring $R$ and value group $\Gamma$.
The main purpose of this paper is to examine algebraic power
series over $K$ and continuous functions given by them. To this
end, in Section~2 we state a version of the implicit function
theorem, and in the next section prove one of the Artin--Mazur
theorem on algebraic power series. Consequently, every algebraic
power series over $K$ determines a unique continuous function
which is definable in the language of valued fields. Section~4
presents certain versions of the theorems of Abhyankar--Jung and
Newton-Puiseux for Henselian subalgebras of formal power series
which are closed under power substitution and division by a
coordinate, given in our paper~\cite{Now1}.

\vspace{1ex}

The last section is devoted to functions of one variable definable
in the language of Denef--Pas. Having the Newton--Puiseux theorem
for algebraic power series at hand, we briefly outline how to
adapt the proof of the theorem on existence of the limit which was
proven over rank one valued fields in our
paper~\cite[Proposition~5.2]{Now2}. This result along with the
technique of fiber shrinking from~\cite[Section~6]{Now2} were, in
turn, two basic tools used in the proof of the closedness
theorem~\cite[Theorem~3.1]{Now2} over Henselian rank one valued
fields.

\vspace{1ex}

We should finally mention that the closedness theorem enables
i.a.\ application of resolution of singularities and
transformation to a normal crossing by blowing up in much the same
way as over the locally compact ground field. Some other its
applications in Henselian geometry are provided in our recent
papers~\cite{Now3,Now4}.

\section{Some versions of the implicit function theorem}

In this section, we give elementary proofs of some versions of the
inverse mapping and implicit function theorems (cf.~the versions
established in the papers \cite[Theorem~7.4]{P-Z} or
\cite[Proposition~3.1.4]{G-G-MB}). We begin with a simplest
version (H) of Hensel's lemma in several variables, studied by
Fisher~\cite{Fish}. Given an ideal $\mathfrak{m}$ of a ring $R$,
let $\mathfrak{m}^{\times n}$ stand for the $n$-fold Cartesian
product of $\mathfrak{m}$ and $R^{\times}$ for the set of units of
$R$. The origin $(0,\ldots,0) \in R^{n}$ is denoted by
$\mathbf{0}$.

\vspace{1ex}

\begin{em}
{\bf (H)} Assume that a ring $R$ satisfies Hensel's conditions
(i.e.\  it is linearly topologized, Hausdorff and complete) and
that an ideal $\mathfrak{m}$ of $R$ is closed. Let $f = (f_{1},
\ldots, f_{n})$ be an $n$-tuple of restricted power series $f_{1},
\ldots, f_{n} \in R\{ X \}$, $X = (X_{1},\ldots,X_{n})$, $J$ be
its Jacobian determinant and $a \in R^{n}$. If $f(\mathbf{0}) \in
\mathfrak{m}^{\times n}$ and $J(\mathbf{0}) \in R^{\times}$, then
there is a unique $a \in \mathfrak{m}^{\times n}$ such that $f(a)
= \mathbf{0}$.
\end{em}

\begin{proposition}\label{H-1}
Under the above assumptions, $f$ induces a bijection
$$ \mathfrak{m}^{\times n} \ni x \longrightarrow f(x) \in \mathfrak{m}^{\times n} $$
 of $\mathfrak{m}^{\times n}$ onto itself.
\end{proposition}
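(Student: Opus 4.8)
The plan is to deduce bijectivity from statement (H) by applying the latter to suitably translated tuples. First I would establish surjectivity: fix an arbitrary $b \in \mathfrak{m}^{\times n}$ and consider the tuple $g := f - b$, whose components $g_i = f_i - b_i$ again lie in $R\{X\}$. Since $f(\mathbf{0}) \in \mathfrak{m}^{\times n}$ and $b \in \mathfrak{m}^{\times n}$, we have $g(\mathbf{0}) = f(\mathbf{0}) - b \in \mathfrak{m}^{\times n}$; moreover the Jacobian determinant of $g$ equals that of $f$, so $J_g(\mathbf{0}) = J(\mathbf{0}) \in R^{\times}$. Hence (H) applies to $g$ and yields a (unique) $a \in \mathfrak{m}^{\times n}$ with $g(a) = \mathbf{0}$, i.e. $f(a) = b$. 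This shows the map $\mathfrak{m}^{\times n} \ni x \mapsto f(x)$ hits every point of $\mathfrak{m}^{\times n}$; note in passing that it does land in $\mathfrak{m}^{\times n}$, since for $x \in \mathfrak{m}^{\times n}$ each $f_i(x) - f_i(\mathbf{0})$ lies in $\mathfrak{m}$ (every non-constant monomial in $x$ lies in $\mathfrak{m}$, using that $\mathfrak{m}$ is an ideal) and $f_i(\mathbf{0}) \in \mathfrak{m}$.

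Next I would prove injectivity. Suppose $f(a) = f(a')$ for $a, a' \in \mathfrak{m}^{\times n}$. The natural move is to substitute $X \mapsto X + a$: set $h_i(X) := f_i(X + a)$, which again defines restricted power series in $R\{X\}$ because $a \in \mathfrak{m}^{\times n}$ and $R$ is complete (the substitution is well-defined on restricted series). Then $h(\mathbf{0}) = f(a) \in \mathfrak{m}^{\times n}$, and by the chain rule the Jacobian of $h$ at $\mathbf{0}$ equals $J(a)$; one checks $J(a) \equiv J(\mathbf{0}) \bmod \mathfrak{m}$, so $J(a) \in R^{\times}$ as well. Now both $\mathbf{0}$ and $a' - a$ lie in $\mathfrak{m}^{\times n}$ and both are sent by $h - f(a)$ to $\mathbf{0}$; the uniqueness clause in (H), applied to $h - f(a)$, forces $a' - a = \mathbf{0}$, i.e. $a = a'$.

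The main obstacle, and the point requiring the most care, is verifying that the formal operations used—subtracting a constant from a restricted power series, and especially substituting $X \mapsto X + a$ with $a \in \mathfrak{m}^{\times n}$—are legitimate within $R\{X\}$ and are compatible with evaluation at points of $\mathfrak{m}^{\times n}$, as well as confirming the congruence $J(a) \equiv J(\mathbf{0}) \bmod \mathfrak{m}$ and that $R^\times$ is stable under adding elements of the closed ideal $\mathfrak{m}$ (which follows from $R$ being Hausdorff and complete, so $\mathfrak{m}$ is contained in the Jacobson radical). These are routine given Hensel's conditions on $R$, but they are exactly what makes the reduction to (H) valid; once they are in place, surjectivity and injectivity both drop out of (H) essentially immediately.
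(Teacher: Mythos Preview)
Your surjectivity argument is exactly the paper's proof: for each $y \in \mathfrak{m}^{\times n}$ one applies (H) to the tuple $f(X)-y$. The paper's proof is literally that one sentence, because the \emph{uniqueness} clause in (H) --- which you even record as ``(unique)'' --- already yields injectivity at the same stroke: if $a,a'\in\mathfrak{m}^{\times n}$ satisfy $f(a)=f(a')=y$, then both are zeros of $f(X)-y$ in $\mathfrak{m}^{\times n}$, hence $a=a'$. So your second paragraph is unnecessary.

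It is also the weakest part of your write-up. The step ``$\mathfrak{m}$ is contained in the Jacobson radical because $R$ is Hausdorff and complete'' is not justified by the stated hypotheses: an arbitrary \emph{closed} ideal of a complete, Hausdorff, linearly topologized ring need not lie in the Jacobson radical (take $R$ with the discrete topology and any proper ideal, e.g.\ $R=\mathbb{Z}$, $\mathfrak{m}=2\mathbb{Z}$). In that example $J(\mathbf{0})=1$ does not force $J(a)\in R^{\times}$ for $a\in\mathfrak{m}$ (e.g.\ $f(X)=X+X^{2}$ gives $J(2)=5\notin\mathbb{Z}^{\times}$), so your route to (H) for the translated system $h$ breaks down. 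The extra hypothesis that elements of $\mathfrak{m}$ are topologically nilpotent, which the paper mentions immediately after the proposition, would rescue this, but it is not part of the standing assumptions. Simply delete the translation argument and invoke the uniqueness in (H) applied to $f(X)-y$; then your proof coincides with the paper's.
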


\begin{proof}
For any $y \in\mathfrak{m}^{\times n}$, apply condition (H) to the
restricted power series $f(X) - y$.
\end{proof}

If, moreover, the pair $(R,\mathfrak{m})$ satisfies Hensel's
conditions (i.e.\ every element of $\mathfrak{m}$ is topologically
nilpotent), then condition (H) holds by \cite[Chap.~III, \S
4.5]{Bour}.

\begin{remark}\label{rem-char}
Henselian local rings can be characterized both by the classical
Hensel lemma and by condition (H): a local ring $(R,\mathfrak{m})$
is Henselian iff $(R,\mathfrak{m})$ with the discrete topology
satisfies condition (H) (cf.~~\cite[Proposition~2]{Fish}).
\end{remark}

Now consider a Henselian local ring $(R,\mathfrak{m})$. Let $f =
(f_{1}, \ldots, f_{n})$ be an $n$-tuple of polynomials $f_{1},
\ldots, f_{n} \in R[ X ]$, $X = (X_{1},\ldots,X_{n})$ and $J$ be
its Jacobian determinant.

\begin{corollary}\label{H-2}
Suppose that $f(\mathbf{0}) \in \mathfrak{m}^{\times n}$ and
$J(\mathbf{0}) \in R^{\times}$. Then $f$ is a homeomorphism of
$\mathfrak{m}^{\times n}$ onto itself in the $\mathfrak{m}$-adic
topology. If, in addition, $R$ is a Henselian valued ring with
maximal ideal $\mathfrak{m}$, then $f$ is a homeomorphism of
$\mathfrak{m}^{\times n}$ onto itself in the valuation topology.
\end{corollary}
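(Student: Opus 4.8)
I would derive the corollary from Proposition~\ref{H-1} together with Remark~\ref{rem-char}. Since $(R,\mathfrak{m})$ is Henselian, it satisfies condition~(H) for the discrete topology by Remark~\ref{rem-char}, and the polynomials $f_{1},\dots,f_{n}$ are in particular restricted power series there; so Proposition~\ref{H-1} already gives that $f$ is a bijection of $\mathfrak{m}^{\times n}$ onto itself. Hence the whole content of the statement is the bicontinuity of $f$, which I would treat separately for the $\mathfrak{m}$-adic topology and, under the extra hypothesis, for the valuation topology. Continuity of $f$ itself is clear in both cases, as $f$ is polynomial and both are ring topologies; the real point is the continuity of $f^{-1}$.

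The common tool is a first-order expansion at a moving base point. Fix $x_{0}\in\mathfrak{m}^{\times n}$ and put $y_{0}=f(x_{0})\in\mathfrak{m}^{\times n}$. For $h=(h_{1},\dots,h_{n})$ one has $f(x_{0}+h)=y_{0}+Ah+Q(h)$, where $A=\big(\partial f_{i}/\partial X_{j}(x_{0})\big)_{i,j}$ has entries in $R$ with $\det A=J(x_{0})$, and $Q(h)$ collects the terms of degree $\ge 2$ in $h$, again with coefficients in $R$. The one point needing care is the invertibility of $A$ over $R$: since $J\in R[X]$ and $x_{0}\in\mathfrak{m}^{\times n}$ we have $J(x_{0})\equiv J(\mathbf{0})\pmod{\mathfrak{m}}$, while $J(\mathbf{0})\notin\mathfrak{m}$, so $J(x_{0})\notin\mathfrak{m}$, i.e.\ $J(x_{0})\in R^{\times}$; hence $A\in\mathrm{GL}_{n}(R)$ and $A^{-1}$ again has entries in $R$. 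Consequently, for any $y\in\mathfrak{m}^{\times n}$ and $h=f^{-1}(y)-x_{0}\in\mathfrak{m}^{\times n}$ one gets the identity
\[ h\;=\;A^{-1}(y-y_{0})\;-\;A^{-1}Q(h). \]

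For the $\mathfrak{m}$-adic topology I would show that $y-y_{0}\in(\mathfrak{m}^{k})^{\times n}$ forces $h\in(\mathfrak{m}^{k})^{\times n}$ --- which is exactly continuity of $f^{-1}$ at $y_{0}$ --- by a straightforward induction on $j\le k$: if $h\in(\mathfrak{m}^{j})^{\times n}$ then $Q(h)\in(\mathfrak{m}^{2j})^{\times n}$, so the displayed identity carries $h$ into $(\mathfrak{m}^{j+1})^{\times n}$ whenever $j<k$. Combined with the continuity of $f$ and the bijectivity already in hand, this proves the first assertion. For the valuation topology I would run the same identity through the ultrametric inequality. Assuming $y\ne y_{0}$ (the other case being trivial), set $\mu=\min_{i}v(h_{i})$, an element of $\Gamma$ with $\mu>0$, and $\nu=\min_{i}v(y_{i}-y_{0,i})$. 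As each component of $Q(h)$ is a sum of products of at least two of the $h_{i}$ over coefficients in $R$, one has $v\big((Q(h))_{i}\big)\ge 2\mu$, and since $A^{-1}$ has entries in $R$ the identity gives $v(h_{i})\ge\min(\nu,2\mu)$ for every $i$, whence $\mu\ge\min(\nu,2\mu)$; as $\mu>0$ the alternative $\mu\ge 2\mu$ is impossible, leaving $\mu\ge\nu$, that is,
\[ \min_{i}v\big((f^{-1}(y))_{i}-(x_{0})_{i}\big)\;\ge\;\min_{i}v\big(y_{i}-(y_{0})_{i}\big). \]
This is (uniform) continuity of $f^{-1}$ in the valuation topology, and with the continuity of $f$ it yields the second assertion. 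The only obstacle I anticipate is organizational: realizing that the two topologies call for parallel but genuinely separate estimates, and keeping the bijectivity --- furnished for free by Proposition~\ref{H-1} --- cleanly apart from these elementary Taylor-remainder bounds.
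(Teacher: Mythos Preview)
Your proof is correct and follows essentially the same line as the paper's. The paper also observes that $J(a)\in R^{\times}$ for all $a\in\mathfrak{m}^{\times n}$, writes the Taylor expansion $f(a+x)-f(a)=\mathcal{M}(a)\cdot\big(x+\mathcal{M}(a)^{-1}g(x)\big)$ with $g\in (X)^{2}R[X]^{n}$, and then declares that the assertion ``follows easily''; you have simply unpacked that clause, carrying out the $\mathfrak{m}$-adic induction and the ultrametric estimate explicitly.
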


\begin{proof}
Obviously, $J(a) \in R^{\times}$ for every $a \in
\mathfrak{m}^{\times n}$. Let $\mathcal{M}$ be the jacobian matrix
of $f$. Then
$$ f(a + x) - f(a) = \mathcal{M}(a) \cdot x + g(x) = \mathcal{M}(a)
   \cdot (x + \mathcal{M}(a)^{-1} \cdot g(x)) $$
for an $n$-tuple $g = (g_{1},\ldots,g_{n})$ of polynomials
$g_{1},\ldots,g_{n} \in (X)^{2} R[X]$. Hence the assertion follows
easily.
\end{proof}

The proposition below is a version of the inverse mapping theorem.

\begin{proposition}\label{H-3}
If $f(\mathbf{0}) = \mathbf{0}$ and $e :=J(\mathbf{0}) \neq 0$,
then $f$ is an open embedding of $e^{2} \cdot \mathfrak{m}^{\times
n}$ into $e \cdot \mathfrak{m}^{\times n}$.
\end{proposition}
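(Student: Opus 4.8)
The plan is to reduce the statement to Corollary~\ref{H-2} by rescaling the variables by $e$. Since $f(\mathbf{0}) = \mathbf{0}$, write $f(X) = \mathcal{M}(\mathbf{0}) \cdot X + g(X)$, where $\mathcal{M}$ is the jacobian matrix of $f$ and $g = (g_{1},\ldots,g_{n})$ with each $g_{i} \in (X)^{2} R[X]$, and let $\mathcal{N}$ be the adjugate matrix of $\mathcal{M}(\mathbf{0})$, so that $\mathcal{N} \cdot \mathcal{M}(\mathbf{0}) = \mathcal{M}(\mathbf{0}) \cdot \mathcal{N} = e \cdot I_{n}$ and $\mathcal{N}$ has entries in $R$. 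The point of the substitution $X = e Y$ is that every monomial occurring in $g$ has degree $\geq 2$, so $g(e Y) = e^{2} \cdot \tilde{g}(Y)$ for an $n$-tuple $\tilde{g}$ of polynomials over $R$ whose entries lie in $(Y)^{2} R[Y]$. Consequently
\[
   \mathcal{N} \cdot f(e Y) = e\, \mathcal{N} \mathcal{M}(\mathbf{0}) \cdot Y + \mathcal{N} \cdot g(e Y) = e^{2} \bigl( Y + \mathcal{N} \cdot \tilde{g}(Y) \bigr) = e^{2} \cdot F(Y),
\]
where $F(Y) := Y + \mathcal{N} \cdot \tilde{g}(Y)$ is a polynomial map over $R$ with $F(\mathbf{0}) = \mathbf{0}$ whose jacobian determinant at $\mathbf{0}$ equals $1$.

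By Corollary~\ref{H-2}, $F$ is a homeomorphism of $\mathfrak{m}^{\times n}$ onto itself; applying the same corollary to $Z \mapsto e^{-1} F(e Z)$, which again has the form ``identity plus terms in $(Z)^{2} R[Z]$'', one sees that $F$ also restricts to a homeomorphism of $e\mathfrak{m}^{\times n}$ onto itself. To carry this back to $f$, multiply the displayed identity on the left by $\mathcal{M}(\mathbf{0})$ and use $\mathcal{M}(\mathbf{0}) \mathcal{N} = e I_{n}$; this gives $e \cdot f(e Y) = e^{2} \, \mathcal{M}(\mathbf{0}) \cdot F(Y)$, whence, cancelling $e$ (legitimate because the ambient valued ring is a domain and $e \neq 0$), the identity of polynomial maps $f(e Y) = e \, \mathcal{M}(\mathbf{0}) \cdot F(Y)$. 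Writing $u = e Y$, this presents the restriction of $f$ to $e^{2}\mathfrak{m}^{\times n}$ as the composite
\[
   e^{2}\mathfrak{m}^{\times n} \xrightarrow{\,\cdot e^{-1}\,} e\mathfrak{m}^{\times n} \xrightarrow{\,F\,} e\mathfrak{m}^{\times n} \xrightarrow{\,\mathcal{M}(\mathbf{0})\,} e\mathfrak{m}^{\times n} \xrightarrow{\,\cdot e\,} e^{2}\mathfrak{m}^{\times n} \subseteq e\mathfrak{m}^{\times n}.
\]
Multiplication by a nonzero scalar and the linear automorphism $\mathcal{M}(\mathbf{0})$ (invertible over the fraction field because $\det \mathcal{M}(\mathbf{0}) = e \neq 0$) are homeomorphisms of the ambient $n$-space, and $F$ is a homeomorphism of $e\mathfrak{m}^{\times n}$ by the previous remark, so the composite is a homeomorphism onto its image; that image is $e^{2}\,\mathcal{M}(\mathbf{0})(\mathfrak{m}^{\times n})$, which is open in the ambient $n$-space and a fortiori open in $e\mathfrak{m}^{\times n}$. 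Hence $f$ is an open embedding of $e^{2}\mathfrak{m}^{\times n}$ into $e\mathfrak{m}^{\times n}$.

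The one genuine subtlety is the cancellation of $e$ in the passage from $e\, f(eY) = e^{2}\mathcal{M}(\mathbf{0})F(Y)$ to $f(eY) = e\,\mathcal{M}(\mathbf{0})F(Y)$: it requires $e$ to be a non-zero-divisor, which is exactly why the statement is meant for the valued-ring (hence domain) situation and why a factor of $e$ has to be spent, shrinking the source from $e\mathfrak{m}^{\times n}$ down to $e^{2}\mathfrak{m}^{\times n}$. Everything else — the factorization $g(eY) = e^{2}\tilde{g}(Y)$, the inclusions such as $\mathcal{M}(\mathbf{0})(\mathfrak{m}^{\times n}) \subseteq \mathfrak{m}^{\times n}$ and $e\,\mathcal{M}(\mathbf{0})(e\mathfrak{m}^{\times n}) = e^{2}\mathcal{M}(\mathbf{0})(\mathfrak{m}^{\times n})$, and the identification of the image as an open set — is routine bookkeeping with powers of $e$.
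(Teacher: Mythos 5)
Your proof is correct and follows essentially the same route as the paper's: substitute $X = eY$, factor out $e\,\mathcal{M}(\mathbf{0})$ via the adjugate $\mathcal{N}$, and apply Corollary~\ref{H-2} to the map $Y + \mathcal{N}\tilde{g}(Y)$ (the paper's $h$). The only difference is presentational: the paper solves $f(eX)=y$ pointwise and exhibits the continuous inverse $y \mapsto e\,h^{-1}(\mathcal{N}\cdot y/e^{2})$, whereas you package the same data as a composite of homeomorphisms and track the image explicitly.
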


\begin{proof}
Let $\mathcal{N}$ be the adjugate of the matrix
$\mathcal{M}(\mathbf{0})$ and $y = e^{2}b$ with $b \in
\mathfrak{m}^{\times n}$. Since
$$ f(eX) = e \cdot \mathcal{M}(\mathbf{0}) \cdot X + e^{2} g(X) $$
for an $n$-tuple $g = (g_{1},\ldots,g_{n})$ of polynomials
$g_{1},\ldots,g_{n} \in (X)^{2} R[X]$, we get the equivalences
$$ f(eX) = y \ \Leftrightarrow \ f(eX) - y = \mathbf{0} \ \Leftrightarrow \
   e \cdot \mathcal{M}(\mathbf{0}) \cdot (X + \mathcal{N}g(X) - \mathcal{N}b) = \mathbf{0}. $$
Applying Corollary~\ref{H-2} to the map $h(X) := X +
\mathcal{N}g(X)$, we get
$$ f^{-1}(y) = ex \ \Leftrightarrow \ x = h^{-1}(\mathcal{N}b) \
   \ \text{and} \ \ f^{-1}(y) = e h^{-1}(\mathcal{N} \cdot y/e^{2}). $$
This finishes the proof.
\end{proof}

\vspace{1ex}

Further, let $0 \leq r < n$, $p = (p_{r+1}, \ldots, p_{n})$ be an
$(n-r)$-tuple of polynomials $p_{r+1},\ldots,p_{n} \in R[X]$, $X =
(X_{1},\ldots,X_{n})$,  and
$$ J := \frac{\partial(p_{r+1}, \ldots,
   p_{n})}{\partial(X_{r+1},\ldots,X_{n})}, \ \ e := J(\mathbf{0}). $$
Suppose that
$$ \mathbf{0} \in V := \{ x \in R^{n}: p_{r+1}(x) = \ldots = p_{n}(x) =
   0 \}. $$
In a similar fashion as above, we can establish the following
version of the implicit function theorem.

\begin{proposition}\label{implicit}
If $e \neq 0$, then there exists a continuous map
$$ \phi: (e^{2} \cdot \mathfrak{m})^{\times r} \longrightarrow
   (e \cdot \mathfrak{m})^{\times (n-r)} $$
which is definable in the language of valued fields and such that
$\phi(0)=0$ and the graph map
$$ (e^{2} \cdot \mathfrak{m})^{\times r} \ni u \longrightarrow (u,\phi(u)) \in
   (e^{2} \cdot \mathfrak{m})^{\times r} \times
   (e \cdot \mathfrak{m})^{\times (n-r)} $$
is an open embedding into the zero locus $V$ of the polynomials
$p$.
\end{proposition}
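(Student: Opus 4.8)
The plan is to imitate the proof of Proposition~\ref{H-3}, but to absorb the scaling by $e$ into the map, so that the decisive step is governed by Corollary~\ref{H-2} (a homeomorphism of $\mathfrak{m}^{\times n}$ onto \emph{itself}) rather than by an open embedding of a possibly proper subset. Write $X = (X',X'')$ with $X' = (X_{1},\ldots,X_{r})$ and $X'' = (X_{r+1},\ldots,X_{n})$, put $\mathcal{M}(X) := \partial p/\partial X''$, so that $J = \det\mathcal{M}$ and $e = \det\mathcal{M}(\mathbf{0})$, and let $\mathcal{N}$ be the adjugate of $\mathcal{M}(\mathbf{0})$, so $\mathcal{N}\mathcal{M}(\mathbf{0}) = eI$. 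Since $\mathbf{0}\in V$, each $p_{i}$ has vanishing constant term, hence $p(X) = AX' + \mathcal{M}(\mathbf{0})X'' + q(X)$, where $A := (\partial p/\partial X')(\mathbf{0})$ has entries in $R$ and $q$ is an $(n-r)$-tuple of polynomials lying in $(X)^{2}R[X]$. First I would substitute $X' = e^{2}U$ and $X'' = eY$: as every monomial of $q$ has total degree $\geq 2$, this substitution contributes a factor $e^{2a+b}$ with $a+b\geq 2$, whence $2a+b\geq 2$, so one may write $q(e^{2}U,eY) = e^{2}\widetilde{q}(U,Y)$ for an $(n-r)$-tuple $\widetilde{q}$ of polynomials in $(U,Y)^{2}R[U,Y]$. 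Multiplying the resulting identity
$$ p(e^{2}U,eY) = e^{2}AU + e\,\mathcal{M}(\mathbf{0})Y + e^{2}\widetilde{q}(U,Y) $$
by $\mathcal{N}$, using $\mathcal{N}\mathcal{M}(\mathbf{0}) = eI$, and cancelling $e^{2}$ (legitimate since $R$ is a domain and $e\neq 0$) then shows that $p(e^{2}U,eY) = \mathbf{0}$ is equivalent to
$$ Y + \mathcal{N}AU + \mathcal{N}\widetilde{q}(U,Y) = \mathbf{0}. $$

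Next I would set $\Theta(U,Y) := \bigl(U,\ Y + \mathcal{N}AU + \mathcal{N}\widetilde{q}(U,Y)\bigr)$, an $n$-tuple of polynomials in $R[U,Y]$. One has $\Theta(\mathbf{0}) = \mathbf{0}$, and since $\widetilde{q}$ has entries in $(U,Y)^{2}$ the Jacobian matrix of $\Theta$ at $\mathbf{0}$ is lower block-triangular with diagonal blocks $I_{r}$ and $I_{n-r}$, hence of determinant $1\in R^{\times}$. So Corollary~\ref{H-2} applies to $\Theta$ and shows that it is a homeomorphism of $\mathfrak{m}^{\times n}$ onto itself in the valuation topology; moreover $\Theta^{-1}$ is definable in the language of valued fields, being characterised by $\Theta^{-1}(z)\in\mathfrak{m}^{\times n}$ and $\Theta(\Theta^{-1}(z)) = z$. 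Because $\Theta$ preserves the first $r$ coordinates, so does $\Theta^{-1}$; for $u\in\mathfrak{m}^{\times r}$ I would let $\psi(u)$ be the tuple of the last $n-r$ coordinates of $\Theta^{-1}(u,\mathbf{0})$, so that $(u,\psi(u)) = \Theta^{-1}(u,\mathbf{0})\in\mathfrak{m}^{\times n}$ and, by the equivalence above, $p(e^{2}u,e\,\psi(u)) = \mathbf{0}$. Finally I would define $\phi\colon(e^{2}\mathfrak{m})^{\times r}\longrightarrow(e\mathfrak{m})^{\times(n-r)}$ by $\phi(u') := e\,\psi(u'/e^{2})$; then $\phi$ is continuous and definable, $\phi(0) = 0$, and $(u',\phi(u'))\in V$ for every $u'$.

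It then remains to check that the graph map $\gamma\colon u'\mapsto(u',\phi(u'))$ is an open embedding into $V$. It is injective, and its inverse is the restriction of the (continuous) projection onto the first $r$ coordinates, so the one substantial point is that the image of $\gamma$ is open in $V$. I would establish that this image coincides with $V\cap W$, where $W := (e^{2}\mathfrak{m})^{\times r}\times(e\mathfrak{m})^{\times(n-r)}$ is open in $R^{n}$. The inclusion of the image in $V\cap W$ is immediate from the construction of $\phi$. For the converse, take $(x',x'')\in V\cap W$ and write $x' = e^{2}u$, $x'' = eY$ with $u\in\mathfrak{m}^{\times r}$ and $Y\in\mathfrak{m}^{\times(n-r)}$; from $p(e^{2}u,eY) = \mathbf{0}$ and the equivalence we get $\Theta(u,Y) = (u,\mathbf{0}) = \Theta(u,\psi(u))$ inside $\mathfrak{m}^{\times n}$, so the injectivity of $\Theta$ forces $Y = \psi(u)$, i.e.\ $x'' = \phi(x')$. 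Hence the image of $\gamma$ equals $V\cap W$ and is therefore open in $V$, and $\gamma$ is an open embedding into $V$. I expect the only steps requiring genuine care to be the bookkeeping in the first paragraph — verifying that after multiplication by $\mathcal{N}$ and division by $e^{2}$ all coefficients remain in $R$ — and the injectivity argument in the last one; the remainder merely transcribes Proposition~\ref{H-3} and Corollary~\ref{H-2}.
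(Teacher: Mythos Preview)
Your proof is correct and follows essentially the same strategy as the paper's: both reduce the implicit-function statement to an application of Corollary~\ref{H-2} by rescaling the variables so that the resulting map has unit Jacobian at the origin, exactly as in the proof of Proposition~\ref{H-3}. The only cosmetic differences are that the paper first packages $p$ into the $n$-tuple $f(X)=(X_{1},\ldots,X_{r},p(X))$ and rescales all coordinates by $e$, whereas you work with $p$ directly and use the asymmetric rescaling $X'=e^{2}U$, $X''=eY$; your write-up is also more explicit than the paper's about why the graph map is an open embedding (identifying its image as $V\cap\bigl((e^{2}\mathfrak{m})^{\times r}\times(e\mathfrak{m})^{\times(n-r)}\bigr)$ via the injectivity of $\Theta$).
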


\begin{proof}
Put $f(X) := (X_{1},\ldots,X_{r},p(X))$; of course, the jacobian
determinant of $f$ at $\mathbf{0} \in R^{n}$ is equal to $e$. Keep
the notation from the proof of Proposition~\ref{H-3}, take any $b
\in e^{2} \cdot \mathfrak{m}^{\times r}$ and put $y := (e^{2}b,0)
\in R^{n}$. Then we have the equivalences
$$ f(eX)=y \ \Leftrightarrow \ f(eX) - y= \mathbf{0} \
   \Leftrightarrow \ e \mathcal{M}(\mathbf{0}) \cdot
   (X + \mathcal{N} g(X) - \mathcal{N} \cdot (b,0)) = \mathbf{0}. $$
Applying Corollary~\ref{H-2} to the map $h(X) := X +
\mathcal{N}g(X)$, we get
$$ f^{-1}(y) = ex \ \Leftrightarrow \ x = h^{-1}(\mathcal{N} \cdot (b,0)) \
   \ \text{and} \ \ f^{-1}(y) = e h^{-1}(\mathcal{N} \cdot y/e^{2}). $$
Therefore the function
$$ \phi(u) := e h^{-1}(\mathcal{N}\cdot (u,0)/e^{2}) $$
is the one we are looking for.
\end{proof}

\section{Density property and a version of the Artin--Mazur
theorem over Henselian valued fields}

We say that a topological field $K$ satisfies the {\it density
property} (cf.~\cite{K-N,Now2}) if the following equivalent
conditions hold.
\begin{enumerate}
\item If $X$ is a smooth, irreducible $K$-variety and
    $\emptyset\neq U\subset X$ is a Zariski open subset,
then $U(K)$ is dense in $X(K)$ in the $K$-topology.
\item If $C$ is a smooth, irreducible $K$-curve and
    $\emptyset\neq U$ is a Zariski open subset, then $U(K)$ is
    dense in
$C(K)$ in the $K$-topology.
\item If $C$ is a smooth, irreducible $K$-curve, then $C(K)$
    has no isolated points.
\end{enumerate}
(This property is indispensable for ensuring reasonable
topological and geometric properties of algebraic subsets of
$K^{n}$; see~\cite{Now2} for the case where the ground field $K$
is a Henselian rank one valued field.) The density property of
Henselian non-trivially valued fields follows immediately from
Proposition~\ref{implicit} and the Jacobian criterion for
smoothness (see e.g.~\cite[Theorem~16.19]{Eis}), recalled below
for the reader's convenience.

\begin{theorem}\label{smooth}
Let $I = (p_{1}, \ldots, p_{s}) \subset K[X]$, $X =
(X_{1},\ldots,X_{n})$ be an ideal, $A := K[X]/I$ and $V :=
\mathrm{Spec}\, (A)$. Suppose the origin $\mathbf{0} \in K^{n}$
lies in $V$ (equivalently, $I \subset (X)K[X]$) and $V$ is of
dimension $r$ at $\mathbf{0}$. Then the Jacobian matrix
$$ \mathcal{M} := \left[
   \frac{\partial p_{i}}{\partial X_{j}}(\mathbf{0}): \: i=1,\ldots,s, \: j=1,\ldots,n
   \right] $$
has rank $\leq (n-r)$ and $V$ is smooth at $\mathbf{0}$ iff
$\mathcal{M}$ has exactly rank $(n-r)$. Furthermore, if $V$ is
smooth at $\mathbf{0}$ and
$$ \mathcal{J} := \frac{\partial (p_{r+1},\ldots,p_{n})}{\partial (X_{r+1},\ldots,X_{n})}
   (\mathbf{0}) = \det \left[ \frac{\partial p_{i}}{\partial X_{j}} (\mathbf{0}):
   \: i,j=r+1,\ldots,n \right] \neq 0, $$
then $p_{r+1},\ldots,p_{n}$ generate the localization $I \cdot
K[X]_{(X_{1},\ldots,X_{n})}$ of the ideal $I$ with respect to the
maximal ideal $(X_{1},\ldots,X_{n})$.
\end{theorem}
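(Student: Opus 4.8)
The plan is to pass to the local ring of $V$ at the $K$-rational point $\mathbf{0}$, namely $\mathcal{O} := K[X]_{(X)}/I\cdot K[X]_{(X)}$ with maximal ideal $\mathfrak{m}$, and to recover $\operatorname{rank}\mathcal{M}$ from the Zariski cotangent space $\mathfrak{m}/\mathfrak{m}^{2}$. Write $R := K[X]_{(X)}$ and $\mathfrak{n} := (X_{1},\ldots,X_{n})R$, so that $\mathfrak{n}/\mathfrak{n}^{2}$ is the $K$-vector space with basis the classes of $X_{1},\ldots,X_{n}$, and $\mathfrak{m}/\mathfrak{m}^{2} = (\mathfrak{n}/\mathfrak{n}^{2})/W$, where $W$ is the image of $IR$. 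Since each $p_{i}$ lies in $(X)$, its class in $\mathfrak{n}/\mathfrak{n}^{2}$ is its linear part $\sum_{j} \frac{\partial p_{i}}{\partial X_{j}}(\mathbf{0})\,X_{j}$, i.e.\ the $i$-th row of $\mathcal{M}$; hence $\dim_{K} W = \operatorname{rank}\mathcal{M}$ and
$$ \dim_{K}\mathfrak{m}/\mathfrak{m}^{2} \;=\; n - \operatorname{rank}\mathcal{M}. $$

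For the first two assertions, recall that $\dim\mathcal{O} = \dim_{\mathbf{0}}V = r$, that a Noetherian local ring always satisfies $\dim\mathcal{O} \leq \dim_{K}\mathfrak{m}/\mathfrak{m}^{2}$ with equality exactly when $\mathcal{O}$ is regular, and that --- $\mathbf{0}$ being a $K$-rational point and $K$ perfect, as $\operatorname{char}K = 0$ --- $V$ is smooth at $\mathbf{0}$ if and only if $\mathcal{O}$ is regular. Substituting the displayed formula gives $r \leq n - \operatorname{rank}\mathcal{M}$, i.e.\ $\operatorname{rank}\mathcal{M} \leq n-r$, and equality $\operatorname{rank}\mathcal{M} = n-r$ holds precisely when $\mathcal{O}$ is regular, i.e.\ precisely when $V$ is smooth at $\mathbf{0}$.

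For the last assertion, assume $V$ is smooth at $\mathbf{0}$ (so $\operatorname{rank}\mathcal{M} = n-r$) and $\mathcal{J}\neq 0$, and set $I' := (p_{r+1},\ldots,p_{n})$, $\mathcal{O}' := R/I'R$ with maximal ideal $\mathfrak{m}'$. The condition $\mathcal{J}\neq 0$ forces the $n-r$ rows of $\mathcal{M}$ indexed by $r+1,\ldots,n$ to be linearly independent, so the cotangent computation applied to $I'$ yields $\dim_{K}\mathfrak{m}'/\mathfrak{m}'^{2} = r$, whence $\dim\mathcal{O}' \leq r$. On the other hand the canonical surjection $\mathcal{O}' \twoheadrightarrow \mathcal{O}$ gives $\dim\mathcal{O}' \geq \dim\mathcal{O} = r$, so $\dim\mathcal{O}' = r = \dim_{K}\mathfrak{m}'/\mathfrak{m}'^{2}$ and $\mathcal{O}'$ is regular, in particular an integral domain. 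A surjection of Noetherian local rings with the same Krull dimension whose source is a domain must have zero kernel, since a nonzero element of the kernel would be a nonzerodivisor of $\mathcal{O}'$ lying in $\mathfrak{m}'$ and would drop the dimension of the quotient below $r$. Therefore $I'R = IR$, i.e.\ $p_{r+1},\ldots,p_{n}$ generate $I\cdot K[X]_{(X)}$.

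The cotangent-space bookkeeping and the inequality $\dim \leq \dim_{K}\mathfrak{m}/\mathfrak{m}^{2}$ are routine. The two steps deserving care are the equivalence of smoothness at a rational point with regularity of the local ring --- this is where perfectness of $K$, hence equicharacteristic zero, is used --- and, in the last part, the passage from "$\mathcal{O}'$ regular of dimension $r$ surjecting onto $\mathcal{O}$ of dimension $r$" to "$I'R = IR$"; I expect the latter to be the main point to get exactly right, as it is what actually feeds into Proposition~\ref{implicit} and the density property.
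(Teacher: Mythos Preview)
Your proof is correct. Note, however, that the paper does not actually prove this theorem: it is stated there as the Jacobian criterion for smoothness, ``recalled for the reader's convenience'' with a reference to \cite[Theorem~16.19]{Eis}, and no argument is given. Your cotangent-space computation and the reduction of the last assertion to the fact that a surjection of equidimensional Noetherian local domains is an isomorphism is exactly the standard route taken in such references.

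One small over-justification: at a $K$-rational point such as $\mathbf{0}$, the equivalence between smoothness over $K$ and regularity of the local ring holds over \emph{any} field, not only perfect ones; perfectness (or characteristic zero) is only needed to pass from regularity to smoothness at closed points whose residue field is a nontrivial extension of $K$. This does not affect the correctness of your argument, but the appeal to equicharacteristic zero at that step is unnecessary.
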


\begin{remark}\label{etale}
Under the above assumptions, consider the completion
$$ \widehat{A} = K[[ X ]]/ I \cdot K[[ X ]] $$
of $A$ in the $(X)$-adic topology. If $\mathcal{J} \neq 0$, it
follows from the implicit function theorem for formal power series
that there are unique power series
$$ \phi_{r+1},\ldots,\phi_{n} \in (X_{1},\ldots,X_{r}) \cdot K[[X_{1},\ldots,X_{r}]] $$
such that
$$ p_{i}(X_{1},\ldots,X_{r},\phi_{r+1}(X_{1},\ldots,X_{r}), \ldots,
   \phi_{n}(X_{1},\ldots,X_{r})) = 0 $$
for $i=r+1,\ldots,n$. Therefore the homomorphism
$$ \widehat{\alpha}: \widehat{A} \longrightarrow K[[X_{1},\ldots,X_{r}]], \ \
   X_{j} \mapsto X_{j}, \ X_{k} \mapsto \phi_{k}(X_{1},\ldots,X_{r}), $$
for $j=1,\ldots,r$ and $k=r+1,\ldots,n$, is an isomorphism.

Conversely, suppose that $\widehat{\alpha}$ is an isomorphism;
this means that the projection from $V$ onto $\mathrm{Spec}\,
K[X_{1},\ldots,X_{r}]$ is etale at $\mathbf{0}$. Then the local
rings $A$ and $\widehat{A}$ are regular and, moreover, it is easy
to check that the determinant $\mathcal{J} \neq 0$ does not vanish
after perhaps renumbering the polynomials $p_{i}(X)$.
\end{remark}

We say that a formal power series $\phi \in K[[X]]$,
$X=(X_{1},\ldots,X_{n})$, is algebraic if it is algebraic over
$K[X]$. The kernel of the homomorphism of $K$-algebras
$$ \sigma: K[X,T] \longrightarrow K[[X]], \ \
   X_{1} \mapsto X_{1}, \, \ldots \, , X_{n} \mapsto X_{n}, \, T \mapsto \phi(X), $$
is, of course, a principal prime ideal:
$$ \mathrm{ker}\, \sigma = (p) \subset K[X,T], $$
where $p \in K[X,T]$ is a unique (up to a constant factor)
irreducible polynomial, called an \emph{irreducible polynomial} of
$\phi$.

\vspace{1ex}

We now state a version of the Artin--Mazur theorem
(cf.~\cite{AM,BCR} for the classical versions).

\begin{proposition}\label{A-M}
Let $\phi \in (X) K[[X]]$ be an algebraic formal power series.
Then there exist polynomials
$$ p_{1},\ldots ,p_{r} \in K[X,Y], \ \ Y=(Y_{1},\ldots,Y_{r}), $$
and formal power series $\phi_{2},\ldots,\phi_{r} \in K[[X]]$ such
that
$$ \mathcal{J} := \frac{\partial (p_{1},\ldots ,p_{r})}{\partial (Y_{1},\ldots ,Y_{r})}
   (\mathbf{0}) = \det \left[ \frac{\partial p_{i}}{\partial Y_{j}} (\mathbf{0}):
   \: i,j=1,\ldots,r \right] \neq 0, $$
and
$$ p_{i}(X_{1},\ldots,X_{n},\phi_{1}(X), \ldots, \phi_{r}(X)) = 0, \ \ i=1,\ldots,r, $$
where $\phi_{1} := \phi$.
\end{proposition}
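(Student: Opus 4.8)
My plan is to start from the irreducible polynomial of $\phi$, check whether it is already étale at the origin, and, if not, enlarge the system by adjoining auxiliary unknowns that record the reciprocals of the denominators which occur. So I would set $\phi_{1}:=\phi$ and let $p\in K[X,T]$ be the irreducible polynomial of $\phi$, so that $\deg_{T}p\ge 1$ and $p(X,\phi(X))=0$. Since $\mathrm{char}\,K=0$ and $p$ is irreducible, $p$ and $\partial p/\partial T$ are coprime in $K(X)[T]$; multiplying out a Bézout identity and substituting $T=\phi(X)$ shows that $q(X):=(\partial p/\partial T)(X,\phi(X))$ is a \emph{nonzero} element of $K[[X]]$. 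If $q(\mathbf 0)\neq 0$, then by Theorem~\ref{smooth} the hypersurface $V(p)\subset\mathbb{A}^{\,n+1}_{K}$ is smooth at $\mathbf 0$, and the assertion holds with $r=1$ and $p_{1}:=p$ (identifying $Y_{1}$ with $T$), the Jacobian being $\mathcal J=(\partial p_{1}/\partial Y_{1})(\mathbf 0)=q(\mathbf 0)\neq 0$.

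The substantive case is $q(\mathbf 0)=0$, e.g.\ $\phi=X_{1}\sqrt{1+X_{1}}$ with $p=T^{2}-X_{1}^{2}-X_{1}^{3}$: then no single polynomial can be étale at $\mathbf 0$, and one must realise $\phi$ inside a \emph{standard étale neighbourhood} of $(A,(X)A)$, where $A:=K[X]_{(X)}$ --- equivalently, inside the Henselization $A^{h}$, which is precisely the ring of algebraic power series. Proving this last fact is the crux and the place where $\mathrm{char}\,K=0$ really enters: with $L:=K(X)(\phi)$ the extension $L/K(X)$ is finite and separable, so the integral closure $B$ of $A$ in $L$ is module-finite over $A$; $B$ embeds into $K[[X]]$ over $A$ (sending $\phi\mapsto\phi$, by normality of $K[[X]]$), hence has a maximal ideal $\mathfrak m$ over $(X)A$ with $B/\mathfrak m\hookrightarrow K[[X]]/(X)=K$, i.e.\ $B/\mathfrak m=K$; comparing the $(X)$-adic order of $K[[X]]$ with the ramification data of $B_{\mathfrak m}$ forces $A\to B_{\mathfrak m}$ to be unramified, hence étale, and the same comparison places $\phi$ in $B_{\mathfrak m}$ even though $\phi$ need not be integral over $A$. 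For $n=1$ this is elementary Dedekind-domain theory; in general one argues after completing, using that $A$ is excellent. Picking a standard étale presentation of an étale neighbourhood containing $\phi$ and translating the new coordinate so that the distinguished point is at $T=0$, I may henceforth assume $\phi$ lies in $A[T]_{g'}/(g)$ with $g\in A[T]$ monic, $g(\mathbf 0,0)=0$, $(\partial g/\partial T)(\mathbf 0,0)\neq 0$, and that the image $\phi_{2}$ of $T$ is a power series in $(X)K[[X]]$ with $g(X,\phi_{2})=0$.

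What is left is bookkeeping. Writing the coefficients of $g$ over a common denominator $\eta\in K[X]$ with $\eta(\mathbf 0)\neq 0$ replaces $g$ by an honest polynomial $G:=\eta g\in K[X,Y_{2}]$ with $G(\mathbf 0,0)=0$ and $c_{0}:=(\partial G/\partial Y_{2})(\mathbf 0,0)\neq 0$; and expanding $\phi$ as an element of the étale algebra yields $\phi=H(X,\phi_{2})/\bigl(\delta_{1}(X,\phi_{2})\cdots\delta_{k}(X,\phi_{2})\bigr)$, where $H\in K[X,Y_{2}]$ and each $\delta_{i}\in K[X,Y_{2}]$ is one of finitely many denominators --- a power of $\partial G/\partial Y_{2}$, or an element of $K[X]$ --- all with $\delta_{i}(\mathbf 0,0)\neq 0$, whence also $H(\mathbf 0,0)=0$ because $\phi(\mathbf 0)=0$. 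For each $i$ I would introduce $\phi_{2+i}:=\delta_{i}(X,\phi_{2})^{-1}-\delta_{i}(\mathbf 0,0)^{-1}\in(X)K[[X]]$, and, with $Y=(Y_{1},\dots,Y_{r})$ and $r:=2+k$, set
\[ p_{1}:=Y_{1}-H(X,Y_{2})\prod_{i=1}^{k}\bigl(Y_{2+i}+\delta_{i}(\mathbf 0,0)^{-1}\bigr),\qquad p_{2}:=G(X,Y_{2}), \]
\[ p_{2+i}:=\bigl(Y_{2+i}+\delta_{i}(\mathbf 0,0)^{-1}\bigr)\delta_{i}(X,Y_{2})-1\qquad(i=1,\dots,k). \]
Then every $p_{j}$ vanishes at $\mathbf 0$, the tuple $(\phi_{1},\dots,\phi_{r})$ satisfies $p_{j}(X,\phi_{1},\dots,\phi_{r})=0$, and the matrix $[(\partial p_{j}/\partial Y_{l})(\mathbf 0)]_{j,l}$ has first column $(1,0,\dots,0)^{\mathsf T}$; deleting that row and column leaves a lower-triangular matrix with diagonal $(c_{0},\delta_{1}(\mathbf 0,0),\dots,\delta_{k}(\mathbf 0,0))$, so $\mathcal J=c_{0}\prod_{i}\delta_{i}(\mathbf 0,0)\neq 0$, as required (and $r$ is smaller when some denominators are trivial). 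The main obstacle is the middle step --- producing the unramified model of the branch cut out by $\phi$; once that is in hand, the rest is the routine denominator-clearing just described.
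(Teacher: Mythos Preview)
Your argument is correct, and it shares its central insight with the paper's proof: in both cases one normalizes and then shows, by passing to completions and invoking normality of the completion (Zariski's theorem / excellence), that the completed local ring at the relevant point is exactly $K[[X]]$, which forces the projection to $X$-space to be \'etale there. The organization, however, is genuinely different. The paper normalizes the \emph{affine} hypersurface $A=K[X,Y_{1}]/(p_{1})$; its integral closure is again an affine $K$-algebra $B=K[X,Y_{1},\ldots,Y_{r}]/(p_{1},\ldots,p_{s})$, so the polynomial relations come for free, and one only has to observe (via the Jacobian criterion of Remark~\ref{etale}) that the isomorphism $\widehat{B}\cong K[[X]]$ forces some $r\times r$ minor of the Jacobian to be nonzero. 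You instead normalize the \emph{local} ring $K[X]_{(X)}$ inside $K(X)(\phi)$, deduce that the localized normalization is local-\'etale, invoke the standard-\'etale structure theorem, and then manually clear the denominators arising from $g'$ and from $K[X]_{(X)}$ by adjoining extra unknowns $Y_{2+i}$. This buys you an explicit system with a visibly triangular Jacobian, at the cost of the bookkeeping step; the paper's route is shorter precisely because the affine normalization packages that bookkeeping into the choice of generators $Y_{1},\ldots,Y_{r}$ of $B$ over $K[X]$. Your delicate point---that $\phi$ need not be integral over $K[X]_{(X)}$ but does lie in $B_{\mathfrak m}$---is handled correctly (it follows from faithful flatness of $B_{\mathfrak m}\to\widehat{B_{\mathfrak m}}\cong K[[X]]$ once \'etaleness is known), and the paper sidesteps it entirely by carrying $Y_{1}$ along as a generator of $B$ from the outset.
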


\begin{proof}
Let $p_{1}(X,Y_{1})$ be an irreducible polynomial of $\phi_{1}$.
Then the integral closure $B$ of $A := K[X,Y_{1}]/(p_{1})$ is a
finite $A$-module and thus is of the form
$$ B = K[X,Y]/(p_{1},\ldots,p_{s}), \ \ Y=(Y_{1},\ldots,Y_{r}), $$
where $p_{1},\ldots,p_{s} \in K[X,Y]$. Obviously, $A$ and $B$ are
of dimension $n$, and the induced embedding $\alpha: A \to K[[X]]$
extends to an embedding $\beta: B \to K[[X]]$. Put
$$ \phi_{k} := \beta(Y_{k}) \in K[[X]], \ \ k=1,\ldots,r. $$
Substituting $Y_{k} - \phi_{k}(0)$ for $Y_{k}$, we may assume that
$\phi_{k}(0) = 0$ for all $k=1,\ldots,r$. Hence $p_{i}(\mathbf{0})
=0$ for all $i=1,\ldots,s$.

The completion $\widehat{B}$ of $B$ in the $(X,Y)$-adic topology
is a local ring of dimension $n$, and the induced homomorphism
$$ \widehat{\beta}: \widehat{B} =
   K[[X,Y]]/(p_{1},\ldots,p_{s}) \longrightarrow K[[X]] $$
is, of course, surjective. But, by the Zariski main theorem
(cf.~\cite[Chap.~VIII, \S~13, Theorem~32]{Z-S}), $\widehat{B}$ is
a normal domain. Comparison of dimensions shows that
$\widehat{\beta}$ is an isomorphism. Now, it follows from
Remark~\ref{etale} that the determinant $\mathcal{J} \neq 0$ does
not vanish after perhaps renumbering the polynomials $p_{i}(X)$.
This finishes the proof.
\end{proof}

Propositions~\ref{A-M} and~\ref{implicit} immediately yield the
following

\begin{corollary}
Let $\phi \in (X) K[[X]]$ be an algebraic power series with
irreducible polynomial $p(X,T) \in K[X,T]$. Then there is an $a
\in K$, $a \neq 0$, and a unique continuous function
$$ \widetilde{\phi}: a \cdot R^{n} \longrightarrow K $$
which is definable in the language of valued fields and such that
$\widetilde{\phi}(0) = 0$ and $p(x,\widetilde{\phi}(x)) =0$ for
all $x \in a \cdot R^{n}$. \hspace*{\fill}$\Box$
\end{corollary}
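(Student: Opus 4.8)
The plan is to combine Proposition~\ref{A-M} with Proposition~\ref{implicit}. First I would apply Proposition~\ref{A-M} to $\phi=\phi_{1}\in(X)K[[X]]$: it furnishes polynomials $p_{1},\ldots,p_{r}\in K[X,Y]$, $Y=(Y_{1},\ldots,Y_{r})$, where $p_{1}=p_{1}(X,Y_{1})$ may be taken to be the irreducible polynomial $p$ of $\phi$, together with formal power series $\phi_{2},\ldots,\phi_{r}\in K[[X]]$, so that
$$ \mathcal{J}:=\det\left[\frac{\partial p_{i}}{\partial Y_{j}}(\mathbf{0}):\ i,j=1,\ldots,r\right]\neq 0,\qquad p_{i}\bigl(X,\phi_{1}(X),\ldots,\phi_{r}(X)\bigr)=0\ \ (i=1,\ldots,r). $$
In particular $p_{i}(\mathbf{0})=0$, so the origin of $K^{n+r}$ lies in the zero locus $V$ of $p_{1},\ldots,p_{r}$.

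Next I would apply Proposition~\ref{implicit} in the $n+r$ variables $(X_{1},\ldots,X_{n},Y_{1},\ldots,Y_{r})$, regarding $X_{1},\ldots,X_{n}$ as the free variables and $Y_{1},\ldots,Y_{r}$ as the dependent ones, with the $r$-tuple of defining polynomials $(p_{1},\ldots,p_{r})$; the Jacobian $\partial(p_{1},\ldots,p_{r})/\partial(Y_{1},\ldots,Y_{r})$ at $\mathbf{0}$ equals $\mathcal{J}\neq 0$. This yields a continuous map
$$ \Phi=(\Phi_{1},\ldots,\Phi_{r})\colon(\mathcal{J}^{2}\cdot\mathfrak{m})^{\times n}\longrightarrow(\mathcal{J}\cdot\mathfrak{m})^{\times r}, $$
definable in the language of valued fields, with $\Phi(\mathbf{0})=\mathbf{0}$ and whose graph map is an open embedding into $V$. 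Since the valuation is non-trivial, pick $a\in K$, $a\neq 0$, with $a\cdot R\subseteq\mathcal{J}^{2}\cdot\mathfrak{m}$ (e.g. $a=\mathcal{J}^{2}\varpi$ for some $\varpi\in\mathfrak{m}\setminus\{0\}$) and set $\widetilde{\phi}:=\Phi_{1}|_{a\cdot R^{n}}$.

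Then $\widetilde{\phi}\colon a\cdot R^{n}\to K$ is continuous and definable in the language of valued fields, $\widetilde{\phi}(0)=\Phi_{1}(\mathbf{0})=0$, and --- since the graph of $\Phi$ is contained in $V$ and $p=p_{1}$ involves only $X$ and $Y_{1}$ ---
$$ p\bigl(x,\widetilde{\phi}(x)\bigr)=p_{1}\bigl(x,\Phi_{1}(x),\ldots,\Phi_{r}(x)\bigr)=0\qquad\text{for all }x\in a\cdot R^{n}, $$
which is exactly the function asserted in the statement.

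For uniqueness I would observe that, by Remark~\ref{etale}, the coordinate projection $V\to\mathbb{A}^{n}$ onto the $X$-space is etale at $\mathbf{0}$, hence a local homeomorphism in the valuation topology (this is the content of Corollary~\ref{H-2} and Proposition~\ref{implicit}); therefore the continuous section of this projection through $\mathbf{0}$ is unique, and so is the germ of $\widetilde{\phi}=\Phi_{1}$ at $0$. The step I expect to demand the most care is precisely this bookkeeping: arranging the renumbering allowed by Proposition~\ref{A-M} so that the distinguished polynomial $p_{1}$ remains equal to the irreducible polynomial of $\phi$ (equivalently $p\in(p_{1},\ldots,p_{r})$), matching the Jacobian factor $\mathcal{J}$ with a domain of the clean form $a\cdot R^{n}$, and being careful about what uniqueness means --- a root of $p$ vanishing at the origin need not be unique once the hypersurface $\{p=0\}$ is singular there, so the relevant content is that this construction attaches to $\phi$ a well-defined germ $\widetilde{\phi}$.
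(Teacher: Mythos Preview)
Your approach is exactly the one the paper intends: the corollary is stated without proof, as an immediate consequence of Propositions~\ref{A-M} and~\ref{implicit}, and your write-up simply unpacks that combination.

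One clarification is worth making, since you flag it yourself. You write that ``$p_{1}=p_{1}(X,Y_{1})$ may be taken to be the irreducible polynomial $p$ of $\phi$'', but this need \emph{not} survive the renumbering in Proposition~\ref{A-M}. Indeed, if $\partial p/\partial T(\mathbf{0})=0$ (e.g.\ $p(X,T)=T^{2}-X^{2}(1+X)$ with $\phi(X)=X\sqrt{1+X}$), the row of the Jacobian matrix corresponding to $p$ is identically zero in the $Y$-variables, so $p$ can never sit inside a nonsingular $r\times r$ minor. The two conditions you list parenthetically are therefore not equivalent. What does hold, and is all you need, is the weaker statement: from the \emph{proof} of Proposition~\ref{A-M} one has $p\in(p_{1},\ldots,p_{s})$ globally (since $A\hookrightarrow B$), and by Theorem~\ref{smooth} the renumbered $p_{1},\ldots,p_{r}$ generate this ideal in the localization $K[X,Y]_{(X,Y)}$. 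Hence $p$ vanishes on $V(p_{1},\ldots,p_{r})$ in some Zariski neighbourhood of the origin, and after possibly shrinking $a$ you get $p(x,\Phi_{1}(x))=0$ as required. With this adjustment your argument is complete; your remarks on uniqueness (as a statement about germs, via the \'etale local section) are also the correct reading of the corollary.
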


For simplicity, we shall denote the induced continuous function by
the same letter $\phi$. This abuse of notation will not lead to
confusion in general.

\begin{remark}
Clearly, the ring $K[[X]]_{alg}$ of algebraic power series is the
henselization of the local ring $K[X]_{(X)}$ of regular functions.
Therefore the implicit functions
$\phi_{r+1}(u),\ldots,\phi_{n}(u)$ from Proposition~\ref{implicit}
correspond to unique algebraic power series
$$ \phi_{r+1}(X_{1},\ldots,X_{r}),\ldots,\phi_{n}(X_{1},\ldots,X_{r}) $$
without constant term. In fact, one can deduce by means of the
classical version of the implicit function theorem for restricted
power series (cf.~\cite[Chap.~III, \S 4.5]{Bour} or~\cite{Fish})
that $\phi_{r+1},\ldots,\phi_{n}$ are of the form
$$ \phi_{k}(X_{1},\ldots,X_{r}) = e \cdot \omega_{k} (X_{1}/e^{2}, \ldots,
   X_{r}/e^{2}), \ \ k= r+1,\ldots, n, $$
where $\omega_{k}(X_{1},\ldots, X_{r}) \in
R[[X_{1},\ldots,X_{r}]]$ and $e \in R$.
\end{remark}

\section{The Newton--Puiseux and Abhyankar--Jung Theorems}

Here we are going to provide a version of the Newton--Puiseux
theorem, which will be used in analysis of definable functions of
one variable in the next section.

\vspace{1ex}

We call a polynomial
$$ f(X;T)= T^{s} + a_{s-1}(X)T^{n-1} + \cdots + a_{0}(X) \in K[[X]][T],
$$
$X=(X_{1},\ldots,X_{s})$, quasiordinary if its discriminant $D(X)$
is a normal crossing:
$$ D(X) = X^{\alpha} \cdot u(X) \ \ \ \mbox{ with } \ \ \alpha \in
   \mathbb{N}^{s}, \ u(X) \in k[[X]], \ u(0) \neq 0. $$

Let $K$ be an algebraically closed field of characteristic zero.
Consider a henselian $K[X]$-subalgebra $K \langle X \rangle$ of
the formal power series ring $K[[X]]$ which is closed under
reciprocal (whence it is a local ring), power substitution and
division by a coordinate. For positive integers
$r_{1},\ldots,r_{n}$ put
$$ K \langle X_{1}^{1/r_{1}},\ldots, X_{n}^{1/r_{n}} \rangle := \{
   a( X_{1}^{1/r_{1}},\ldots, X_{n}^{1/r_{n}}): a(X) \in K \langle X
   \rangle \}; $$
when $r_{1}= \ldots =r_{m}=r$, we denote the above algebra by $K
\langle X^{1/r} \rangle$.

\vspace{1ex}

In our paper~\cite{Now1}, we established a version of the
Abhyankar--Jung theorem.

\begin{proposition}\label{A-J}
Under the above assumptions, every quasi\-ordinary polynomial
$$ f(X;T)= T^{s} + a_{s-1}(X)T^{s-1} + \cdots + a_{0}(X)
   \in K \langle X \rangle[T] $$
has all its roots in $K \langle X^{1/r} \rangle$ for some $r \in
\mathbb{N}$; actually, one can take $r = s!$.
\end{proposition}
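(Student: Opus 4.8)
The plan is to run the classical Jung-type induction on the number of variables $n$, reducing to the one-variable Newton–Puiseux case and then using the quasiordinary hypothesis to patch. First I would dispose of the base case $n=1$: over an algebraically closed field $K$ of characteristic zero, the Newton–Puiseux theorem for $K\langle X_1\rangle$ gives all roots in $K\langle X_1^{1/s!}\rangle$ (this is exactly the one-variable instance one either cites from \cite{Now1} or proves directly via the implicit function theorem applied after a ramified substitution $X_1 = Y_1^{s!}$, using that $f$ becomes a product of distinct-root factors after the ramification because the discriminant, being a normal crossing in one variable, becomes a unit times a power of $Y_1$, and dividing by that coordinate power—permitted in $K\langle\,\cdot\,\rangle$—makes it a unit). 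For the inductive step, set $X = (X', X_n)$ with $X' = (X_1,\ldots,X_{n-1})$, and work first over the ``generic'' locus where $X_n$ is inverted, i.e. over the fraction field in $X_n$ of $K\langle X'\rangle[[X_n]]$-type coefficients; apply the $n=1$ Newton–Puiseux statement in the variable $X_n$ to obtain the roots as Puiseux series in $X_n^{1/s!}$ with coefficients that are, a priori, meromorphic in $X'$.

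The heart of the argument is to show those $X'$-coefficients in fact lie in $K\langle (X')^{1/r}\rangle$ after the substitution $X_n \mapsto X_n^{s!}$. Here is where the quasiordinary hypothesis does its work: after the ramification $X_n = Y_n^{s!}$, the discriminant $D(X) = X^\alpha u(X)$ with $u(0)\ne 0$ pulls back to a normal crossing in $(X', Y_n)$ whose $Y_n$-part is a unit times a power of $Y_n$; dividing $f$ through by that coordinate power (again legitimate in $K\langle\,\cdot\,\rangle$) one may assume the discriminant is, up to a unit, $X'^{\beta}$, so that $f$ viewed over $K\langle X'\rangle\langle Y_n\rangle$ has roots that are pairwise non-congruent modulo the maximal ideal and Hensel's lemma (available since $K\langle X\rangle$ is Henselian) splits $f$ completely—the roots are then genuine elements of $K\langle X', Y_n\rangle$ rather than merely formal Puiseux data. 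One then records that the roots of the original $f$ are obtained from these by $Y_n = X_n^{1/s!}$, i.e. they lie in $K\langle (X_1,\ldots,X_{n-1}, X_n^{1/s!})\rangle$; a symmetric argument in each coordinate, or an inductive bookkeeping of the ramification indices, upgrades this to $K\langle X^{1/s!}\rangle$, and one checks $\mathrm{lcm}$ of the partial indices divides $s!$.

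The main obstacle I anticipate is the bookkeeping that keeps everything inside the prescribed subalgebra $K\langle X\rangle$ at every stage: the classical Jung proof freely passes to formal or convergent power series and uses normalization/finiteness of integral closure, but here one must instead lean only on the three structural closure properties (reciprocal, power substitution, division by a coordinate) together with Henselianity, and verify that each ramification $X_i \mapsto X_i^{r_i}$ and each division by a coordinate power stays within the algebra—this is precisely what ``closed under power substitution and division by a coordinate'' buys, but marshalling it cleanly, and in particular justifying that after the single ramification $X \mapsto (X_1^{s!},\ldots,X_n^{s!})$ the discriminant becomes a unit times a monomial so that the splitting is by honest Hensel rather than by an infinite Puiseux process, is the delicate point. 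A secondary technical nuisance is handling the case where some $a_i$ vanish identically or the polynomial is not monic in the reduced variables after division; one normalizes this at the outset by extracting the largest monomial factor, which the ``division by a coordinate'' axiom permits.
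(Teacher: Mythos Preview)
The paper does not give a self-contained proof of this proposition; it merely records the statement and cites \cite{Now1}. So there is no in-paper argument to compare against, but your proposal still needs to stand on its own, and the inductive step as written has a real gap.

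The problematic passage is: ``dividing $f$ through by that coordinate power \ldots\ one may assume the discriminant is, up to a unit, $X'^{\beta}$, so that $f$ viewed over $K\langle X'\rangle\langle Y_n\rangle$ has roots that are pairwise non-congruent modulo the maximal ideal and Hensel's lemma \ldots\ splits $f$ completely.'' Two things go wrong here. First, ``dividing $f$ by a coordinate power'' is not a well-defined operation on a monic polynomial: the closure axiom lets you divide \emph{coefficients} by coordinates when they are divisible, but it does not let you scrub a factor $Y_n^{m}$ out of the discriminant at will; the discriminant is a polynomial in the $a_i$, and changing it requires a substitution of the form $T\mapsto X^{\gamma}S$, which in turn presupposes that \emph{every} root is divisible by $X^{\gamma}$---something you have not established. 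Second, and more seriously, even granting that the discriminant becomes $X'^{\beta}\cdot(\text{unit})$, this is still a non-unit in the local ring $K\langle X',Y_n\rangle$ whenever $\beta\neq 0$, so the reductions of the roots to the residue field $K$ coincide and Hensel's lemma does \emph{not} split $f$. If instead you intend Hensel relative to the ideal $(Y_n)$ (residue ring $K\langle X'\rangle$), the linear factors $T-\zeta_i(X',0)$ are pairwise coprime in $K\langle X'\rangle[T]$ only when each difference $\zeta_i(X',0)-\zeta_j(X',0)$ is a unit, which again fails since these differences divide $X'^{\beta}$.

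What is missing is precisely Abhyankar's key lemma: for a quasiordinary $f$, any two roots satisfy $\zeta_i-\zeta_j = X^{\gamma_{ij}}\cdot(\text{unit})$ in the fractional-power ring. This structural fact is what allows one, after the global substitution $X\mapsto X^{s!}$, to perform a sequence of substitutions $T\mapsto c(X)+X^{\gamma}S$ (Tschirnhausen plus monomial rescaling) that eventually makes the discriminant a unit, at which point Hensel genuinely applies. Your outline invokes only the three closure axioms and Henselianity but never isolates this lemma, and without it the induction does not close. The argument in \cite{Now1} (to which the paper defers) is built around exactly this kind of reduction; you would need to supply it, or an equivalent device, for the plan to go through.
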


A particular case is the following version of the Newton-Puiseux
theorem.

\begin{corollary}\label{N-P}
Let $X$ denote one variable. Every polynomial
$$ f(X;T)= T^{s} + a_{s-1}(X)T^{s-1} + \cdots + a_{0}(X)
   \in K \langle X \rangle[T] $$
has all its roots in $K \langle X^{1/r} \rangle$ for some $r \in
\mathbb{N}$; one can take $r = s!$. Equivalently, the polynomial
$f(X^{r},T)$ splits into $T$-linear factors. If $f(X,T)$ is
irreducible, then $r=s$ will do and
$$ f(X^{s},T) = \prod_{i=1}^{s} \, (T - \phi(\epsilon^{i}X)), $$
where $\phi(X) \in K \langle X \rangle$ and $\epsilon$ is a
primitive root of unity.
\end{corollary}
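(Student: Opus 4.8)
The plan is to derive Corollary~\ref{N-P} from Proposition~\ref{A-J} by first reducing to the one-variable case and then sharpening the statement for irreducible polynomials. When $X$ is a single variable, the discriminant $D(X)\in K\langle X\rangle$ is a power series in one variable, hence automatically of the form $X^{a}\cdot u(X)$ with $u(0)\neq 0$ (unless $D\equiv 0$, which one disposes of separately — if the discriminant vanishes, $f$ has a repeated factor and one argues by induction on $s$, splitting off the radical). Thus every polynomial $f(X;T)\in K\langle X\rangle[T]$ is quasiordinary, and Proposition~\ref{A-J} immediately gives that all roots lie in $K\langle X^{1/r}\rangle$ with $r=s!$. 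The equivalence with ``$f(X^{r},T)$ splits into $T$-linear factors'' is just the substitution $X\mapsto X^{r}$: a root $\phi(X^{1/r})$ of $f(X;T)$ becomes a genuine element $\phi(X)\in K\langle X\rangle$ after this substitution, using that $K\langle X\rangle$ is closed under power substitution.

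Next I would treat the irreducible case. Suppose $f(X,T)$ is irreducible in $K\langle X\rangle[T]$ of degree $s$; I claim $r=s$ suffices. The roots of $f$ live in $K\langle X^{1/s!}\rangle$, which sits inside the Puiseux field; pick one root and write it as $\phi_{0}(X^{1/d})$ where $d\mid s!$ is minimal, i.e.\ $\phi_{0}(Y)\in K\langle Y\rangle$ is not itself a power series in any $Y^{e}$ with $e>1$. The Galois group of $K\langle X\rangle((X^{1/d}))$ over $K\langle X\rangle((X))$ is cyclic of order $d$, generated by $X^{1/d}\mapsto \epsilon X^{1/d}$ for a primitive $d$-th root of unity $\epsilon$, so the conjugates of $\phi_{0}(X^{1/d})$ are $\phi_{0}(\epsilon^{i}X^{1/d})$, $i=1,\ldots,d$, and these are pairwise distinct by minimality of $d$. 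Their product $\prod_{i=1}^{d}(T-\phi_{0}(\epsilon^{i}X^{1/d}))$ is a polynomial over $K\langle X\rangle[T]$ of degree $d$ dividing $f$; since $f$ is irreducible this forces $d=s$. Setting $\phi(X):=\phi_{0}(X)$ and substituting $X\mapsto X^{s}$ (again using closure under power substitution) yields
$$ f(X^{s},T)=\prod_{i=1}^{s}(T-\phi(\epsilon^{i}X)), $$
with $\phi\in K\langle X\rangle$ and $\epsilon$ a primitive $s$-th root of unity.

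The main obstacle is the claim that $d=s$ in the irreducible case, i.e.\ that the ramification index of the root actually equals the degree. This is really the assertion that $K\langle X\rangle((X))$-irreducibility of $f$ prevents the cyclic factor of degree $d$ from being proper, and the argument above handles it, but one must be careful that the intermediate product $\prod_{i=1}^{d}(T-\phi_{0}(\epsilon^{i}X^{1/d}))$ genuinely has coefficients in $K\langle X\rangle$ and not merely in $K\langle X^{1/d}\rangle$: the coefficients are symmetric functions of the conjugates, hence fixed by the Galois action $X^{1/d}\mapsto\epsilon X^{1/d}$, hence lie in $K\langle X^{1/d}\rangle^{\mathrm{Gal}}=K\langle X\rangle$, using that $K\langle X\rangle$ is integrally closed (being henselian and closed under reciprocal) in its fraction field and that division by the coordinate keeps us inside the algebra. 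A secondary point requiring a line of care is the reduction from arbitrary $f$ to the case $D\not\equiv 0$: one notes $f = g\cdot h^{2}\cdots$ type factorization into squarefree part times lower-degree pieces, or simply replaces $f$ by its radical $f_{\mathrm{red}}$, whose roots are the same and whose discriminant is nonzero, invoking $s!$-divisibility of the various $r$'s that occur.
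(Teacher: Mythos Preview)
Your proposal is correct and follows the paper's intended route: the paper gives no proof at all for this corollary, simply announcing it as ``a particular case'' of Proposition~\ref{A-J}. You correctly supply the missing details, including the observation that in one variable every nonzero discriminant is automatically a normal crossing, and the standard Galois-theoretic argument for the irreducible case (which is not literally contained in Proposition~\ref{A-J} and does require the extra reasoning you provide).

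One minor sharpening: your claim that $K\langle X\rangle$ is integrally closed is right, but the reason is not henselianity per se. Closure under reciprocal makes $K\langle X\rangle$ local with maximal ideal $\{g:g(0)=0\}$, and closure under division by the coordinate then forces this maximal ideal to equal $(X)$; hence $K\langle X\rangle$ is a discrete valuation ring, and in particular integrally closed. This is what guarantees that the Galois-invariant symmetric functions of the $\phi_{0}(\epsilon^{i}X^{1/d})$ land back in $K\langle X\rangle$ rather than merely in its fraction field.
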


\begin{remark}\label{puis}
Since the proof of these theorems is of finitary character, it is
easy to check that if the ground field $K$ of characteristic zero
is not algebraically closed, they remain valid for the Henselian
subalgebra $\overline{K} \otimes_{K} K \langle X \rangle$ of
$\overline{K}[[ X ]]$, where $\overline{K}$ denotes the algebraic
closure of $K$.
\end{remark}

\vspace{1ex}

The ring $K[[X]]_{alg}$ of algebraic power series is a local
Henselian ring closed under power substitutions and division by a
coordinate. Thus the above results apply to the algebra $K \langle
X \rangle = K[[X]]_{alg}$.

\section{Definable functions of one variable}

At this stage, we can readily to proceed with analysis of
definable functions of one variable over arbitrary Henselian
valued fields of equicharacteristic zero. We wish to obtain a
general version of the theorem on existence of the limit stated
below. It was proven in~\cite[Proposition~5.2]{Now2} over rank one
valued fields. Now the language $\mathcal{L}$ under consideration
is the three sorted language of Denef--Pas.

\begin{proposition}\label{limit-th} (Existence of the limit)
Let $f:A \to K$ be an $\mathcal{L}$-definable function on a subset
$A$ of $K$ and suppose $0$ is an accumulation point of $A$. Then
there is a finite partition of $A$ into $\mathcal{L}$-definable
sets $A_{1},\ldots,A_{r}$ and points $w_{1}\ldots,w_{r} \in
\mathbb{P}^{1}(K)$ such that
$$ \lim_{x \rightarrow 0}\, f|A_{j}\, (x) = w_{j} \ \ \ \text{for} \ \
   j=1,\ldots,r. $$
Moreover, there is a neighbourhood $U$ of $0$ such that each
definable set
$$ \{ (v(x), v(f(x))): \; x \in (A_{j} \cap U) \setminus \{0 \} \}
   \subset \Gamma \times (\Gamma \cup \ \{
   \infty \}),  \ j=1,\ldots,r, $$
is contained in an affine line with rational slope
$$ l = \frac{p_{j}}{q} \cdot k + \beta_{j}, \ j=1,\ldots,r, $$
with $p_{j},q \in \mathbb{Z}$, $q>0$, $\beta_{j} \in \Gamma$, or
in\/ $\Gamma \times \{ \infty \}$. \hspace*{\fill} $\Box$
\end{proposition}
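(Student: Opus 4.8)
The plan is to adapt the proof of the rank one case \cite[Proposition~5.2]{Now2}; the only new ingredients are the Newton--Puiseux theorem for algebraic power series (Corollary~\ref{N-P}, applied with $K\langle X\rangle=K[[X]]_{alg}$, together with its extension to a non algebraically closed ground field in Remark~\ref{puis}) and the corollary to the Artin--Mazur theorem (following Proposition~\ref{A-M}), which guarantees that an algebraic power series over $K$ defines a genuine continuous, $\mathcal{L}$-definable function on a polydisc $a\cdot R^{n}$ --- over rank one fields this role was played by an analytic expansion. First I would carry out the standard reduction: by cell decomposition and elimination of valued field quantifiers in the language of Denef--Pas, exactly as in \cite{Now2}, there is a finite partition of $A$ into $\mathcal{L}$-definable pieces on each of which $f$ is either constant or \emph{algebraic over} $K(x)$, i.e.\ there is an irreducible $P_{j}(x,y)\in K[x,y]$ with $P_{j}(x,f(x))=0$ on the piece $A_{j}$; the finitely many pieces not having $0$ in their closure carry no condition (the limit is vacuous) and may be ignored. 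If $f\equiv 0$ on $A_{j}$ the claims are trivial (take $w_{j}=0$; the associated set lies in $\Gamma\times\{\infty\}$), and if $f\equiv c\neq 0$ take $w_{j}=c$ and note the set lies on the line of slope $0$ through $v(c)$. So assume $f$ is algebraic and non-constant on $A_{j}$.

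Next I would expand $f$ in an algebraic Puiseux series at $0$. Writing $P_{j}=b_{s}(x)y^{s}+\dots+b_{0}(x)$ and substituting $y\mapsto y/b_{s}(x)$ produces a monic polynomial $\widetilde{P}_{j}(x,y)=y^{s}+\widetilde{b}_{s-1}(x)y^{s-1}+\dots+\widetilde{b}_{0}(x)\in K[x][y]\subset K[[x]]_{alg}[y]$; its discriminant is a nonzero one variable power series, hence automatically a normal crossing, so $\widetilde{P}_{j}$ is quasiordinary and Corollary~\ref{N-P} together with Remark~\ref{puis} applies. Thus, after passing to $\overline{K}$, all roots of $\widetilde{P}_{j}$ lie in $\overline{K}[[x^{1/q}]]_{alg}$ for a suitable $q\in\matN$, which we fix once and for all as $q:=\operatorname{lcm}_{j}(s_{j}!)$. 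Writing the root matching $f$ near $0$ as a monomial times a unit (using closure of $\overline{K}[[x^{1/q}]]_{alg}$ under division by a coordinate) and dividing by $b_{s}(x)=x^{N}u(x)$, $u(0)\neq 0$, a polynomial whose reciprocal again lies in the Henselian local ring $\overline{K}[[x^{1/q}]]_{alg}$, yields
$$ f(x)=x^{m_{j}/q}\cdot\theta_{j}(x^{1/q}),\qquad m_{j}\in\matZ,\quad \theta_{j}\in\overline{K}[[T]]_{alg},\quad \theta_{j}(0)=c_{j}\neq 0, $$
valid for $x\in A_{j}$ close enough to $0$. By the corollary to Proposition~\ref{A-M}, applied to the algebraic power series $\theta_{j}-c_{j}$, the series $\theta_{j}$ extends to a continuous function on a polydisc $a_{j}\cdot\overline{R}$; since $\theta_{j}(0)=c_{j}\neq 0$ there is a neighbourhood $U_{j}$ of $0$ such that, for $x\in A_{j}\cap U_{j}$, one has $v(\theta_{j}(x^{1/q}))=v(c_{j})$ and $\theta_{j}(x^{1/q})\to c_{j}$ as $x\to 0$.

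Finally I would read off the two conclusions. For $x\in(A_{j}\cap U_{j})\setminus\{0\}$,
$$ v(f(x))=\tfrac{m_{j}}{q}\,v(x)+v(c_{j}), $$
so the set $\{(v(x),v(f(x)))\}$ lies on an affine line with rational slope $p_{j}/q$, $p_{j}:=m_{j}$; refining the partition once more so that $v(x)$ runs over a single coset of $q\Gamma$ on each piece --- legitimate because $\mathcal{L}$-definable subsets of $\Gamma$ are semilinear --- brings this line to the stated normal form with $\beta_{j}\in\Gamma$. For the limit: if $m_{j}>0$ then $v(f(x))\to+\infty$, so $f(x)\to 0=:w_{j}$; if $m_{j}<0$ then $v(f(x))\to-\infty$, so $f(x)\to\infty=:w_{j}\in\matP^{1}(K)$; and if $m_{j}=0$ then $f(x)=\theta_{j}(x^{1/q})\to c_{j}=:w_{j}$, where $c_{j}\in K$ because $f$ is $K$-valued and a Henselian valued field of equicharacteristic zero is topologically closed in each of its finite extensions (a short consequence of Hensel's lemma). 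Taking $U:=\bigcap_{j}U_{j}$, a neighbourhood of $0$, finishes the proof.

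I expect the main obstacle to be Step~1: carrying out, via the Denef--Pas cell decomposition, the reduction to the piecewise algebraic situation and then singling out the unique convergent algebraic Puiseux branch of $f$ at $0$. This is precisely where the analytic reasoning available over rank one valued fields in \cite{Now2} must be replaced by the combination of the algebraic Newton--Puiseux theorem (Corollary~\ref{N-P}) and the Artin--Mazur type statement that algebraic power series over $K$ determine honest continuous functions on polydiscs; the ensuing value group bookkeeping is routine.
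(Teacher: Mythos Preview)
Your proposal is correct and follows essentially the same route as the paper: adapt the rank one argument of \cite[Proposition~5.2]{Now2} by replacing the analytic algebras $L\{X\}$ and $\widehat{K}\{X\}$ with the algebraic power series algebras $\overline{K}\otimes_{K}K[[X]]_{alg}$ and $K[[X]]_{alg}$, invoking the Newton--Puiseux theorem for the latter (Corollary~\ref{N-P} together with Remark~\ref{puis}) and the fact that $K$ is closed in $\overline{K}$. The paper's own proof is a one-paragraph ``mutatis mutandis'' reference to \cite{Now2}; you have simply unpacked the same substitution in greater detail, and your additional explicit appeal to the Artin--Mazur corollary (to interpret algebraic power series as genuine continuous definable functions on polydiscs) makes explicit what is implicit there.
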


\begin{proof}
Having the Newton--Puiseux theorem for algebraic power series at
hand, we can repeat mutatis mutandis the proof from loc.~cit.\ as
briefly outlined below. In that paper, the field $L$ is the
completion of the algebraic closure $\overline{K}$ of the ground
field $K$. Here, in view of Corollary~\ref{puis}, the $K$-algebras
$L\{ X \}$ and $\widehat{K}\{ X \}$ should be just replaced with
$\overline{K} \otimes_{K} K[[X]]_{alg}$ and $K[[X]]_{alg}$,
respectively. Then the reasonings follow almost verbatim. Note
also that Lemma~5.1 (to the effect that $K$ is a closed subspace
of $\overline{K}$) holds true for arbitrary Henselian valued
fields.
\end{proof}

We conclude with the following comment. The above proposition
along with the technique of fiber shrinking
from~\cite[Section~6]{Now2} were two basic tools in the proof of
the closedness theorem~\cite[Theorem~3.1]{Now2} over Henselian
rank one valued fields, which plays an important role in Henselian
geometry.

\vspace{1ex}

\vspace{2ex}

\begin{small}
%\begin{sc}
Institute of Mathematics

Faculty of Mathematics and Computer Science

Jagiellonian University

%Faculty of Mathematics and Computer Science

ul.\ Profesora \L{}ojasiewicza 6, 30-348 Krak\'{o}w, Poland

{\em E-mail address: nowak@im.uj.edu.pl}
%\end{sc}
\end{small}


\begin{thebibliography}{99}

%\bibitem{B-G-R}
%S.~Bosch, U.~G\"{u}ntzer, R.~Remmert, {\em Non-Archimedian
%Analysis: a systematic approach to rigid analytic geometry\/},
%Grundlehren der math.\ Wiss.\ {\bf 261}, Springer-Verlag, Berlin,
%1984.

\bibitem{Bour}
N.~Bourbaki, {\em Alg\`{e}bre Commutative}, Hermann, Paris, 1962.

%\bibitem{Con}
%B.~Conrad, \emph{Weil and Grothendieck approaches to adelic
%points}, Notes at the author’s website,
%http://math.stanford.edu/~conrad/papers/adelictop.pdf.

\bibitem{AM}
M.~Artin, B.~Mazur, \emph{On periodic points}, Ann.\ Math.\ {\bf
81} (1965), 82--99.

\bibitem{BCR}
J.~Bochnak, M.~Coste, M.-F.~Roy, \emph{Real
  Algebraic Geometry}, Ergebnisse der Mathematik und ihrer Grenzgebiete,
  vol.~36, Springer-Verlag, Berlin, 1998.

\bibitem{Eis}
D.~Eisenbud, \emph{Commutative Algebra with a View Towards
Algebraic Geometry}, Graduate Texts in Math.\ {\bf 150},
Springer-Verlag, New York, 1994.

\bibitem{E-Pre}
A.J.~Engler, A.~Prestel, \emph{Valued Fields}, Springer-Verlag,
Berlin, 2005.

\bibitem{Fish}
B.~Fisher, {\em A note on Hensel's lemma in several variables\/},
Proc.\ Amer.\ Math.\ Soc.\ {\bf 125} (11) (1997), 3185--3189.

%\bibitem{EGA}
%A.~Grothendieck, J.A.~Dieudonn\'{e}, \emph{El\'{e}ments de
%G\'{e}ometrie Alg\'{e}brique}, Springer-Verlag, Berlin,
%Heidelberg, New York, 1971.

\bibitem{G-G-MB}
O.~Gabber, P.~Gille, L.~Moret-Bailly, \emph{Fibr\'{e}s principaux
sur les corps valu\'{e}s hens\'{e}liens}, Algebraic Geometry {\bf
1} (2014), 573--612.

\bibitem{K-N}
J.~Koll{\'a}r, K.~Nowak, {\em Continuous rational functions on
real and $p$-adic varieties\/}, Math.\ Zeitschrift {\bf 279}
(2015), 85--97.

%\bibitem{Moret}
%L.~Moret-Bailly, \emph{An extension of Greenberg’s theorem to
%general valuation rings}, Manuscripta Math.\ {\bf 139} (2012),
%153–166.

\bibitem{Now1}
K.J.~Nowak, {\em Supplement to the paper "Quasianalytic
perturbation of multiparameter hyperbolic polynomials and
symmetric matrices" (Ann.\ Polon.\ Math.\ 101 (2011),
275--291)\/}, Ann. Polon. Math. {\bf 103} (2012), 101-107.

\bibitem{Now2}
K.J.~Nowak, {\em Some results of algebraic geometry over Henselian
rank one valued fields\/},  Sel.\ Math.\ New Ser.\ {\bf 23}
(2017), 455--495.

\bibitem{Now3}
K.J.~Nowak, {\em H\"{o}lder and Lipschitz continuity of functions
definable over Henselian rank one valued fields}, arXiv:1702.03463
[math.AG].

\bibitem{Now4}
K.J.~Nowak, {\em Piecewise continuity of functions definable over
Henselian rank one valued fields}, arXiv:1702.07849 [math.AG].


\bibitem{P-Z}
A.~Prestel, M.~Ziegler, \emph{Model theoretic methods in the
theory of topological fields}, J.\ Reine Angew.\ Math.\ {\bf
299--300} (1978), 318--341.

\bibitem{Z-S}
O.~Zariski, P.~Samuel, {\em Commutative Algebra}, Vol.\ II, Van
Nostrand, Princeton, 1960.

\end{thebibliography}
\end{document}